\def \and{\qquad\text{and}\qquad}
\newcommand{\Id}{{1\hspace{-4pt} 1}}
\newtheorem{proposition}{Proposition}[section]
\newtheorem{theorem}[proposition]{Theorem}
\theoremstyle{definition}
\newtheorem{definition}[proposition]{Definition}
\theoremstyle{remark}
\numberwithin{equation}{section}
\begin{document}

\title[Smooth circle actions on $S^7$]{Smooth circle actions on $S^7$ with unbounded periods and non-linearizable multicentres}

\author{Massimo Villarini}
\footnote[1]{Massimo Villarini, Dipartimento di Scienze Fisiche, Informatiche e Matematiche, via Campi 213/b 41100, Universit\'a di Modena e Reggio Emilia, Modena, Italy

E-mail: massimo.villarini@unimore.it}

\begin{abstract}
We will give an example of a smooth free action of $S^1=U(1)$ on $S^7$ whose orbits have unbounded lenghts (equivalently: unbounded periods). As an application of this example we construct a $C^{\infty}$ vector field $X$, defined in a neighbourhood $U$ of $0 \in \mathbb{R}^8$, such that: $U-\{0\}$ is foliated by closed integral curves of $X$, the differential $DX(0)$ at $0$ defines a $1$-parameter group of nondegenerate rotations and $X$ is {\it not} orbitally equivalent to its linearization. This proves in the $C^{\infty}$ category that the classical Poincar\'e Centre Theorem, true for planar nondegenerate centres, is not generalizable to multicentres.
\end{abstract}

\maketitle

\section{Introduction}

Let $X$, $Y$, be $C^1$ vector fields, respectively defined in the open sets $U$, $V$, and suppose that $q \in U$, $p \in V$ are stationary points. These vector fields define singular foliations of $U$ and $V$, and are orbitally equivalent if these foliations are equivalent. In other words, $X$, $Y$ are orbitally equivalent if there exists a diffeomorphism $\phi : U \rightarrow V$ and a smooth positive function $f:U \rightarrow \mathbb R$ such that $\phi_* (f X)=Y$. If $Y=DX(0)$ the question of orbital equivalence goes under the name of linearizability of a vector field in a neighbourhood of a stationary point. Poincar\'e, Siegel, Sternberg, Arnold, Bruno and many others went into the study of the delicate interplay of metrically generic arithmetic  (diophantine) conditions on the eigenvalues of the differential and the linearizability of the vector field, the so called small divisors problem. On the other hand, prompted by non-generic but important dynamical questions, {\it e.g.} Hamiltonian dynamics, Poincar\'e himself started the study of the following more geometric linearization problem, where diophantine conditions fail.
Let $X$ be a vector field in $U\subset \mathbb{R}^2$, having a stationary point at $0$, defining a foliations by circles of $U-\{ 0 \}$, such that $DX(0)$ is the infinitesimal generator of the standard action of $S^1=SO(2)$ on $\mathbb{R}^2$: $0$ is named {\it nondegenerate centre}.  In \cite{poincare} Poincar\'e proved:
\begin{theorem}\label{centro}(Poincar\'e Centre Theorem)
Let $X$ be a real anlytic vector field in a neighbourhood $U \subset \mathbb R^2$ of a {\it nondegenerate centre} at $0$: then $X$ is linearizable, {\it i.e.} is locally orbitally equivalent to its linearization.
\end{theorem}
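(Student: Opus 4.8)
The plan is to reduce the statement to Bochner's linearization theorem for compact group actions, after first replacing $X$ by an orbitally equivalent vector field all of whose closed orbits have one and the same period.

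First I would pass to polar coordinates $x = r\cos\theta$, $y = r\sin\theta$. Since $DX(0)$ is the standard rotation generator $-y\partial_x + x\partial_y$, in these coordinates $X$ reads $\dot\theta = 1 + O(r)$, $\dot r = O(r^2)$, so on a punctured disc $\{0 < r < \rho\}$ one has $\dot\theta > 0$ and each orbit is a graph $r = r(\theta; r_0)$ over the circle $\theta \in \mathbb R/2\pi\mathbb Z$, depending analytically on $(\theta, r_0) \in (\mathbb R/2\pi\mathbb Z)\times[0,\rho)$ and with $r(\theta;0)\equiv 0$. The hypothesis that $U\setminus\{0\}$ is foliated by \emph{closed} curves says precisely that each such graph closes up, i.e. $r(2\pi;r_0) = r_0$; equivalently the Poincar\'e first-return map to the transversal $\Sigma = \{\theta = 0\}$ is the identity. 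Consequently the \emph{period function} $T(r_0) = \int_0^{2\pi} d\theta/\dot\theta$, evaluated along $r = r(\theta;r_0)$, is a well-defined analytic function of $r_0 \in [0,\rho)$, constant on orbits, with $T(0) = 2\pi$ equal to the period of the linear part.

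I would then set $\hat X := (2\pi/T)\,X$, where $T$ is viewed as the first integral on $U$ obtained by spreading $T(r_0)$ along orbits. Because $2\pi/T$ is a smooth positive function equal to $1$ at the origin, $\hat X$ is a smooth vector field, orbitally equivalent to $X$, with the same linear part $DX(0)$, and now \emph{every} orbit of $\hat X$ in $U\setminus\{0\}$ has period exactly $2\pi$; in other words the flow $(\hat X_t)_{t\in\mathbb R}$ descends to a smooth free action of $S^1 = \mathbb R/2\pi\mathbb Z$ on a neighbourhood of $0$, fixing $0$, whose derivative at $0$ is the linear rotation action $R_t$. Now I linearize this action by Bochner's averaging device: the map
\[
\psi(q) \;:=\; \frac{1}{2\pi}\int_0^{2\pi} R_{-t}\bigl(\hat X_t(q)\bigr)\,dt
\]
fixes $0$ and has $D\psi(0) = \frac{1}{2\pi}\int_0^{2\pi} R_{-t}R_t\,dt = \mathrm{Id}$, hence is a diffeomorphism of a neighbourhood of $0$; and substituting $u = t+s$ and using $2\pi$-periodicity one gets $\psi\circ\hat X_s = R_s\circ\psi$ for all $s$. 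Differentiating this identity in $s$ at $s = 0$ gives $\psi_*\hat X = -y\partial_x + x\partial_y = DX(0)$, and since $\hat X = (2\pi/T)X$ with $2\pi/T$ smooth and positive, this is exactly an orbital equivalence between $X$ and its linearization near $0$.

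The conceptual skeleton above is short; the work is in two analytic verifications, which I expect to be the main obstacle. The first is that the family of orbit-graphs $r = r(\theta;r_0)$, the return map, and hence $T$, really are analytic up to and including $r_0 = 0$, so that $2\pi/T$ extends smoothly across the singularity and $\hat X$ is genuinely smooth at $0$ — this rests on analytic dependence of solutions of $\dot r/\dot\theta = F(r,\theta)$ on the initial condition and on $F$ being analytic near $r = 0$ after the division by $\dot\theta = 1 + O(r)$. The second is simply to keep the orbital-equivalence bookkeeping consistent, in particular that $\hat X$ and $X$ share the same $1$-jet at $0$, which is what lets $R_t$ be the common linearization. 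An alternative, closer to Poincar\'e's own argument and avoiding Bochner's theorem, is to construct directly an analytic first integral $H = \tfrac12(x^2+y^2) + O(3)$: solving $X\cdot H = 0$ order by order meets a sequence of obstructions — the focal or Lyapunov quantities — and the centre hypothesis is exactly what forces all of them to vanish; one then builds the linearizing chart from $H$ together with an angular coordinate, the hard point there being the convergence of the power series for $H$.
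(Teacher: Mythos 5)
The paper itself does not prove Theorem \ref{centro}: it is quoted as classical, with the proof deferred to \cite{poincare}, \cite{moussu}, \cite{villarini}. So I can only judge your proposal against those. Your route --- rescale $X$ to make all periods equal to $2\pi$, then linearize the resulting circle action by Bochner averaging --- is the standard geometric strategy (essentially that of \cite{villarini} in the smooth category), and the Bochner step itself is correct. The problem is that there is a genuine gap at exactly the point you label the ``first analytic verification'', and the justification you offer for it does not close it. Analytic dependence on initial conditions gives you that $r_0\mapsto T(r_0)$ is analytic on the transversal $[0,\rho)$. It does \emph{not} give you that the orbit-constant function $\tau(q):=T(r_0(q))$ is smooth at $q=0$, because the map $q\mapsto r_0(q)$ sending a point to the coordinate of its orbit's intersection with $\Sigma$ behaves like $|q|+O(|q|^2)$ and has a conical singularity at the origin. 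Nothing in your argument excludes, say, $T'(0)\neq 0$, and in that case $\tau$, hence $\hat X=(\text{multiple of }1/\tau)\,X$, would fail to be $C^1$ at $0$ and Bochner's averaging could not even be started. Ruling this out is where the real content of the theorem lives: one must use the centre hypothesis a second time, e.g.\ via the half-return involution $\sigma$ of $\Sigma$ (follow orbits to the opposite half-axis), which satisfies $T\circ\sigma=T$ and $\sigma(r_0)=-r_0+O(r_0^2)$, and then show that $T$ is an analytic function of a quantity (an adapted ``radius squared'', or a Morse-type analytic first integral as in Moussu's proof) that genuinely extends analytically across the origin. Without some such argument, $\hat X$ is only defined and smooth on $U\setminus\{0\}$, and the theorem is not proved. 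Your alternative sketch via Lyapunov quantities has the same status: it names the obstructions but does not address the convergence, which is again the crux.

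Two smaller slips. The rescaling is inverted: if $\hat X=gX$ with $g$ constant on an orbit of $X$-period $T$, that orbit has $\hat X$-period $T/g$, so you need $\hat X=(T/2\pi)X$, not $(2\pi/T)X$ (your choice yields periods $T^2/2\pi$). And the $S^1$-action you construct fixes the origin, so it is not free; this is harmless for Bochner's theorem, but the wording should be corrected.
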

Poincar\'e 's proof of this theorem is analytic: a geometric proof due to Moussu \cite{moussu} appeared much later.  For a smooth version see \cite{villarini}.

The question answered by this theorem has an obvious analogous in dimension $2n$, $n>1$.
\begin{definition}
Let $X$ be a $C^1$ vector field defined in a neighbourhood $V$ of $0$, the origin in $\mathbb R^{2n}$, $n>1$. Then $0$ is a {\it nondegenerate multicentre} for $X$ if:
\begin{itemize}
\item[(i)] $X(0)=0$
\item [(ii)] the differential at $0$, $DX(0)$, is the infinitesimal generator of a $1$-parameter group of nondegenerate  rotations
\item [(iii)] there exists a neighbourhood $U$ of $0$, $U \subset V$, such that $U-\{ 0 \}$ is foliated by closed curves which are trajectories of $X$.
\end{itemize}

\end{definition}
The natural question to ask is:

Let $0$ be a nondegenerate multicentre of $X$: is it linearizable?

Affirmative answer to this question was obtained by Urabe and Sibuya \cite{urabesibuya} and later, with a geometric proof and a slight extension of the result, by Brunella and the author \cite{brunellavillarini}, under the hypotheses:
\begin{itemize}
\item[.] $X$ is real analytic
\item[.] the period function of the nondegenerate multicentre, defined in $U-\{ 0 \}$ as the first-return time of each periodic orbit, is {\it bounded}.
\end{itemize}
On the other hand, it is obvious that if $X$ is a smooth vector field which is linearizable near a nondegenerate multicentre, then its period function is locally bounded near the multicentre.

Therefore an example of a nondegenerate multicentre whose period function is not locally bounded near the stationary point is an example of non-linearizable nondegenerate multicentre, too.

\begin{theorem}\label{multicentri}
There exists a $C^{\infty}$ vector field $X$ defined in a neighbourhood $V$ of the origin $0$ in $\mathbb R^8$ such that:
\begin{itemize}
\item [(i)] $0$ is a nondegenerate multicentre for $X$
\item [(ii)] $X$ is not orbitally equivalent to the linear vector field defined by $DX(0)$ in any neighbourhood of $0$: {\it i.e.}, $X$ is not linearizable at $0$.
\end{itemize}
Moreover $X$ has an analytic first integral whose level sets are Euclidean $7$-spheres in $\mathbb R^8$, and the period function of $X$ is {\it unbounded} on each invariant $7$-dimensional sphere with sufficiently small radius.
\end{theorem}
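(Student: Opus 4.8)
The plan is to transplant the circle foliation of $S^{7}$ furnished by the construction announced in the introduction onto every Euclidean sphere $\{|x|=r\}\subset\R^{8}$, and to glue the outcome to a linear rotation at the origin. Realize $\R^{8}=\mathbb{C}^{4}$ and let $A$ be the infinitesimal generator of the diagonal action $e^{i\theta}\cdot x=e^{i\theta}x$ of $U(1)$ on $\mathbb{C}^{4}$: a nonsingular skew-symmetric operator, whose exponential is a free $2\pi$-periodic rotation group and whose elements $e^{\theta A}$ ($\theta\notin2\pi\mathbb{Z}$) are nondegenerate rotations. Let $W_{0}$ be the restriction of the linear field $x\mapsto Ax$ to $S^{7}$, the Hopf field, all of whose orbits are the circles $\{e^{i\theta}x\}$, of period $2\pi$. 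The construction on $S^{7}$ announced in the introduction provides a $C^{\infty}$ vector field $W=W_{0}+\xi$ on $S^{7}$, with $\xi$ tangent to $S^{7}$, all of whose orbits are circles and whose period function is unbounded; the feature of its construction that we shall use is that $W_{0}+c\,\xi$ likewise has all orbits closed, with unbounded period function, for \emph{every} $c\in(0,1]$. Fix a $C^{\infty}$ function $\mu:[0,\infty)\to[0,\infty)$ that is positive on $(0,r_{0}]$, supported in $[0,2r_{0})$, flat at $0$, and satisfies $\mu(r)/r\le1$ on $(0,r_{0}]$ (with $r_{0}$ small), and set
\[
X(x)\;=\;Ax\;+\;\mu(|x|)\,\xi\!\left(\tfrac{x}{|x|}\right)\quad(x\neq0),\qquad X(0):=0 .
\]

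Three points need checking. First, $X$ is $C^{\infty}$ on $V:=\{|x|<2r_{0}\}$ with $DX(0)=A$: this is clear away from $0$, while at $0$ the second term is the product of the bounded $0$-homogeneous map $x\mapsto\xi(x/|x|)$, whose derivatives of order $k$ are $O(|x|^{-k})$, with $\mu(|x|)=h(|x|^{2})$ for a $C^{\infty}$ function $h$ flat at $0$; such a product, and all its derivatives, vanish at $0$ faster than any power of $|x|$, so it extends $C^{\infty}$, flat there. Hence $DX(0)=A$ is the infinitesimal generator of a $1$-parameter group of nondegenerate rotations, so (i)--(ii) of the definition of nondegenerate multicentre hold. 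Second, $X$ is tangent to every sphere $\{|x|=r\}$ --- $Ax$ is, as $A$ is skew, and $\xi$ is tangent to $S^{7}$ --- so $F(x)=|x|^{2}$ is a real analytic first integral of $X$ whose level sets are precisely the Euclidean $7$-spheres. Third, for $0<r\le r_{0}$ the scaling diffeomorphism $y\mapsto ry$ of $S^{7}$ onto $\{|x|=r\}$ pushes $W_{0}+c(r)\,\xi$ forward to $X|_{\{|x|=r\}}$, where $c(r):=\mu(r)/r\in(0,1]$, since $X(ry)=A(ry)+\mu(r)\xi(y)=r\bigl(W_{0}(y)+c(r)\xi(y)\bigr)$ and the pushforward of a field $V$ on $S^{7}$ takes at $ry$ the value $rV(y)$; by the feature recalled above every orbit of $W_{0}+c(r)\xi$ is a circle, so every orbit of $X$ in $\{|x|=r\}$ is closed. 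This gives (iii) of the definition, so $0$ is a nondegenerate multicentre for $X$, which is (i) of Theorem~\ref{multicentri}.

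Finally, since pushing a vector field forward by a diffeomorphism conjugates its flow, the period function of $X$ restricted to $\{|x|=r\}$ equals that of $W_{0}+c(r)\,\xi$ (transported by $y\mapsto ry$), hence is unbounded for each $r\in(0,r_{0}]$; in particular the period function of $X$ is unbounded on every neighbourhood of $0$. Were $X$ orbitally equivalent near $0$ to $x\mapsto Ax$, say $\phi_{*}(fX)=Ax$ with $\phi$ a diffeomorphism and $f>0$ of class $C^{\infty}$, then $\phi$ would conjugate the flow of $fX$ to $x\mapsto e^{\theta A}x$, whose period function is the constant $2\pi$, and --- $f$ being continuous and positive --- this forces the period function of $X$ to be locally bounded near $0$, which is the remark preceding Theorem~\ref{multicentri}; this contradicts the unboundedness just shown, so $X$ is not linearizable at $0$, giving (ii) of Theorem~\ref{multicentri}. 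The principal obstacle is the transplantation step: one must place on the spheres $\{|x|=r\}$ circle foliations depending $C^{\infty}$ on $r$ down to $r=0$, which are necessarily forced to collapse to the linear Hopf foliation to infinite order as $r\to0$ (for any $C^{\infty}$ field, $r^{-1}X(r\cdot)\to DX(0)\big|_{S^{7}}$ in $C^{\infty}$), while still exhibiting an unbounded period function on every single sphere. These requirements are compatible only because the period function of a vector field is \emph{not} continuous under $C^{\infty}$ perturbation, and establishing that the pathology of the $S^{7}$-example persists under every scaling $c\,\xi$ of its defining perturbation --- the feature invoked above --- is where the substantive work lies.
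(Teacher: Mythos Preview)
Your architecture is right and matches the paper's: place a circle foliation on each sphere $\{|x|=r\}$ and let it collapse to the Hopf foliation fast enough as $r\to0$ to yield a $C^\infty$ field at the origin. The genuine gap is the step you yourself flag at the end: you \emph{assert} that the single perturbation $\xi$ coming from Theorem~\ref{essesette} has the property that $W_0+c\,\xi$ still has all orbits closed, with unbounded periods, for every $c\in(0,1]$. This is not a feature of the construction, and there is no reason to expect it to hold. What the Sullivan-type construction underlying Theorem~\ref{essesette} actually delivers (Proposition~\ref{campo} and Theorem~\ref{fibrati}) is a one-parameter family $X_\lambda=H+Y_\lambda$ in which the perturbation $Y_\lambda$ depends \emph{nontrivially} on $\lambda$: for each $\lambda$ the orbits are the $SO(3)$-translates of the lift of a specific closed curve $\Gamma_\lambda$ on $S^2$ (a trochoid with parameters $h(\lambda),a(\lambda)$), and closedness of the orbits encodes the closedness of precisely that curve together with a commensurability condition (\ref{angolo}) on the second factor. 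Replacing $Y_\lambda$ by $c\,Y_\lambda$ with $c\neq1$ changes the projected base curve to something that is no longer $\Gamma_\lambda$ and has no reason to close up; the closed-orbit property is not preserved under convex interpolation with the Hopf field.

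The paper avoids this by not scaling a fixed $\xi$ at all: it lets the radius select the parameter of the family, setting $\lambda=\lambda(u_5,R)=R/(1-u_5^2)$ with $R=1/r$, so that on $\{|x|=r\}$ one obtains (a conjugate of) a genuine Sullivan field from the family, which \emph{by construction} has all orbits closed and unbounded period. Smoothness at $0$ then comes from the exponential flatness of $Y_\lambda$ as $\lambda\to\infty$, not from an external flat cutoff $\mu$. Your cutoff $\mu$ handles the smoothness at the origin but at the cost of destroying the very mechanism that makes the orbits close on each sphere; the paper's device handles both simultaneously because the family $X_\lambda$ was built, from the outset, so that each member foliates by circles while the family collapses exponentially to $H$.
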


Smoothness of the vector field $X$ in the statement means infinite differentiability. Our example is not real analytic.

The dynamical meaning of Theorem (\ref{multicentri}) can be stated as follows: it is possible to perturb nonlinearly $n \geq 4$ identical harmonic oscillators, keeping that energy is conserved and dynamics are periodic for all initial data sufficiently close to the stationary point, obtaining a truely nonlinear and uncoupled system. This is not possible if $n=1$ (Poincar\'e, Theorem (\ref{centro})) or $n=2$ (D. Epstein \cite{epstein}, Theorem (\ref{essetre}) below).

The construction of the example in Theorem (\ref{multicentri}) is obtained by blowing up the stationary point, and by the following theorem which is perhaps of independent interest:
\begin{theorem}\label{essesette}
There exists a smooth free action of $S^1 = U(1)$ on the Euclidean $S^7$, defined by a smooth vector field $X$, such that its  minimal periods are unbounded. Moreover there exists a smooth $\lambda$-family of vector fields $X_{\lambda} : S^7 \rightarrow T S^7$, $\lambda \in [0,1[$, which smoothly converges as $\lambda \rightarrow 1$ to the Hopf vector field, {\it i.e.} to the isochronous infinitesimal generator of the Hopf circle bundle $S^1 \hookrightarrow  S^7 \rightarrow \mathbb C \mathbb P^3$.

\end{theorem}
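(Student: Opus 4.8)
\medskip
\noindent\textit{Plan of proof.} The idea is to modify the Hopf vector field of $S^7$ inside a flow-box so as to create a nonempty Epstein ``bad set'' of orbits of unbounded period, and to obtain the family $X_\lambda$ by letting this modification tend to zero.

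First I would set up the model. Realise $S^7\subset\mathbb C^4$ and let $H$ be the Hopf vector field (the infinitesimal generator of $z\mapsto e^{it}z$), so that $S^1\hookrightarrow S^7\to\mathbb{CP}^3$ and every $H$-orbit is a circle of period $2\pi$; fix one closed Hopf orbit $C_0$. Since $\mathbb{CP}^3$ has real dimension $6$ and the Hopf bundle is trivial over a ball, a Hopf-saturated neighbourhood of $C_0$ is a flow-box $B\cong S^1\times D^6$ on which $H=\partial_\theta$. The vector field $X$ is to agree with $H$ outside $B$ and to equal $\partial_\theta+Y$ inside, with $Y$ supported away from $\partial B$.

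The heart of the construction, and the step I expect to be the main obstacle, is the choice of $Y$: it must keep all orbits of $X$ closed, while forcing the orbits near some invariant subtorus $T\subset B$ to wind around $T$ an unbounded number of times before they close, so that their minimal periods (a bounded multiple of that winding number) are unbounded. This is the mechanism behind the counterexamples to the periodic orbit conjecture of Sullivan and of Epstein--Vogt: one constructs a sequence of invariant tori $T_n\to T$ such that the orbits in the region between $T_n$ and $T_{n+1}$ have integer winding vector $v_n$ around $T$ with $|v_n|\to\infty$. A continuous, hence locally constant, integer winding vector on a connected region would be constant and all periods bounded; what rescues the construction is that $v_n$ may jump across the separating tori $T_n$, and making this happen smoothly needs at least two ``winding'' directions along $T$ together with at least three normal directions transverse to it. That is why the dimension $7$ -- equivalently $\mathbb R^8$ and $n\ge4$ -- leaves room for such an example, while on $S^3$ it is forbidden by Epstein's theorem \cite{epstein} on periodic flows on $3$-manifolds, which underlies Theorem~\ref{essetre}. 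The real work is then to realise all this with $Y$ of class $C^\infty$ and flat on $\partial B$ (so that $X$ is globally $C^\infty$), with $X$ nowhere zero (here ``free'' means only that $X$ has no zeros; the orbits of $X$ then form a circle-foliation whose failure to be the orbit foliation of a genuine $U(1)$-action is precisely the unboundedness of the minimal periods -- a genuine $U(1)$-action on the compact $S^7$ would have all orbit lengths bounded -- and since a nonvanishing vector field on a compact manifold has speed bounded above and below, ``unbounded lengths'' and ``unbounded periods'' coincide, as asserted in the abstract), and with every orbit a circle, the large periods being realised by the orbits accumulating on $T$.

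Finally I would carry out the whole construction in a $\lambda$-family: let $Y_\lambda$ (together with $T$ and $B$) depend on $\lambda\in[0,1[$ so that $Y_\lambda\to0$ in $C^\infty$ as $\lambda\to1$, i.e.\ $X_\lambda\to H$, while for each fixed $\lambda<1$ the sequence of separating tori with $|v_n|\to\infty$ survives, so that $X_\lambda$ still has unbounded minimal periods; then put $X:=X_0$. This is exactly the input for Theorem~\ref{multicentri}: blowing up $0\in\mathbb R^8$ identifies a punctured neighbourhood with $S^7\times(0,\eps)$, and placing on the sphere of radius $\rho$ a $\rho$-rescaled copy of $X_{\lambda(\rho)}$, with $\lambda(\rho)\to1$ flatly enough as $\rho\to0$, produces a $C^\infty$ field on $\mathbb R^8$ vanishing at $0$ with $DX(0)=H=$ the nondegenerate infinitesimal rotation, whose integral curves foliate a punctured ball by circles but whose period is unbounded on every invariant $7$-sphere of small radius -- hence, by the elementary obstruction noted just before Theorem~\ref{multicentri}, not linearizable.
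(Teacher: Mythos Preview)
Your outline is a plan, not a proof: you correctly identify that one must graft a Sullivan--type unbounded-period piece onto the Hopf field and then damp it out, but you explicitly defer ``the real work'' of constructing $Y$, and your sketch of that work is not accurate. The mechanism you describe---nested invariant tori with jumping integer winding vectors, requiring ``two winding directions and three normal directions''---is not Sullivan's construction (which is built from curves of large geodesic curvature on $S^2$ moved around by $SO(3)$), and the dimension count does not explain anything: Sullivan's example is $5$-dimensional and Epstein--Vogt's is $4$-dimensional, so dimension $7$ is not the threshold. More to the point, even granting some low-dimensional example, transplanting it into a tube $S^1\times D^6$ so that it extends $C^\infty$ by $\partial_\theta$ across $\partial B$ is itself the delicate gluing problem, and you give no indication how to carry it out.

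The paper's route is structurally different and supplies exactly this missing step. It does not work in a flow box of the complex Hopf fibration at all; instead it uses the \emph{quaternionic} Hopf bundle $S^3\hookrightarrow S^7\to S^4$. Sullivan's $\lambda$-family on $M=S(S^2)\times_{S^2}S(S^2)$ lifts to $S^3\times S^3$ and tends, exponentially flatly as $\lambda\to\infty$, to the Hopf field on one $S^3$ factor (Proposition~\ref{campo}). Trivialising $S^7\to S^4$ over the two hemispheres and setting $\lambda=1/(1-u_5^2)$ gives a vector field on each chart; the essential point is that the transition map of $\xi_{0,1}$ commutes with the complex Hopf flow (equation~(\ref{commutazione})), so the two pieces glue and the field extends smoothly over the polar fibres as the Hopf field. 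This is Theorem~\ref{fibrati}, from which Theorem~\ref{essesette} follows since $E_{0,1}\simeq S^7$; the $\lambda$-family on $S^7$ is obtained by letting $\lambda$ depend also on a radial parameter, inside the proof of Theorem~\ref{multicentri}. Your parenthetical remark that a genuine $U(1)$-action on compact $S^7$ would have bounded periods is correct; the paper's phrase ``free action of $S^1$'' is being used loosely for the circle foliation generated by a nowhere-vanishing vector field.
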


We briefly outline how these results are obtained. In the next section we give a rather detailed description, and a slight modification, of a celebrated example by D. Sullivan \cite{sullivan} of a real analytic closed manifold foliated by circles through the trajectories of a smooth vector field whose period function is unbounded. Our adaptation of Sullivan's example is a dynamical system consisting of two rigid bodies constrained to have the same istantaneous rotation axis, having  periodic dynamics, with unbounded period, for any initial condition in their compact phase space. This example is developed in the third section to give the proof of Theorem (\ref{essesette}), and to deduce from it Theorem (\ref{multicentri}). Theorem (\ref{essesette}) follows directly from Theorem (\ref{fibrati}) which shows that there exist two Milnor spheres \cite{milnor}, corresponding to the quaternionic Hopf bundles, which are diffeomorphic to the Euclidean $S^7$ and admit fibrations by circles, based on the Sullivan's vector field, with unbounded period function.

{\it Aknowledgments}: I wish to thank my collegues C. Benassi, G. Mazzuoccolo, G. P. Leonardi for useful discussions, and F. Podest\'a for enlightening explanations.

The questions addressed in this article were brought to the author's attention by the collaboration \cite{brunellavillarini} with Marco Brunella. During the rather long period it took to get these partial results I had many times the pleasure to talk with him about this work. This pleasure has been interrupted by his untimely disappearance: I dedicate this work to his memory.

\noindent

\section{A vector field by D. Sullivan.}
\noindent
Let $P$ be a closed differentiable manifold. The following question arose around $1950$'s in relation to stability theory of foliations (Reeb, Haefliger):

 Let $P$ be smoothly foliated by circles:
 
Is the lenght of the leaves a bounded function on $P$?

The answer to this question is independent of the metric on $P$.

Let $X$ be a vector field on $P$ whose integral curves are the leaves of the foliation by circles of $P$, and let
$$
T:P \rightarrow {\mathbb{R}}^+
$$
be the (minimal) period function of $X$, {\it i.e.} denoting $e^{t X}$ the flow of $X$ let:
$$
e^{T(p) X}(p)=p
$$
for every $p \in P$, while $e^{t X}(p) \neq p$ if $0<t<T(p)$. Then the above question is equivalent to:

 Is $T$ bounded on $P$?

The answer to this question depends only on the foliation by circles of $P$.

The affirmative answer to the above questions has been known as the {\it Periodic Orbits Conjecture}, and it has been rather intensively studied around $1970$'s. The main known results are:

\begin{itemize}
\item [(i)] The conjecture is true if $dim \, P=3$ : this has been proved by D. B. A. Epstein \cite{epstein}, who also proved the following theorem, which should be compared to our Theorem (\ref{essesette}):
\begin{theorem}\label{essetre}(Epstein)
Any smooth free circle action on $S^3$ defines a circle bundle which is isomorphic to the Hopf bundle  $S^1\hookrightarrow S^3 \rightarrow S^2$.
\end{theorem}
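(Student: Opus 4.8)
The plan is to recognize the orbit foliation of a free smooth $S^1$-action as a principal circle bundle and then pin it down by elementary algebraic topology; observe at the outset that for a genuinely free $S^1$-action every orbit is traversed exactly once by the fundamental flow, so the periods are automatically constant and the real content of the statement is structural rather than a statement about period bounds.

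\emph{Step 1 (the quotient is a principal bundle).} Since $U(1)=S^1$ is compact, the free action on $S^3$ is automatically proper, so by the quotient manifold theorem the orbit space $B:=S^3/S^1$ carries a unique structure of closed smooth surface for which the projection $\pi:S^3\to B$ is a submersion; freeness together with the slice theorem then upgrades $\pi$ to a smooth principal $S^1$-bundle, i.e.\ $S^1\hookrightarrow S^3\xrightarrow{\ \pi\ }B$ is a locally trivial fibration with fibre $S^1$. This is the one place where both smoothness and freeness are genuinely used.

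\emph{Step 2 (the base is $S^2$).} I would run the long exact homotopy sequence of this fibration. The segment $\pi_1(S^3)\to\pi_1(B)\to\pi_0(S^1)$ reads $0\to\pi_1(B)\to 0$, so $B$ is simply connected; then $H_1(B)=0$, which forces $B$ to be orientable, and the classification of closed surfaces gives $B\cong S^2$. (The same exact sequence also yields $\pi_2(B)\cong\pi_1(S^1)\cong\mathbb Z$, which already rules out positive genus.)

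\emph{Step 3 (the bundle is the Hopf bundle).} Principal $S^1$-bundles over $S^2$ are classified by their Euler class $e\in H^2(S^2;\mathbb Z)\cong\mathbb Z$. Feeding the bundle into the Gysin sequence, the segment $H^0(S^2)\xrightarrow{\ \smile e\ }H^2(S^2)\xrightarrow{\ \pi^*\ }H^2(S^3)=0$ shows that $\smile e:\mathbb Z\to\mathbb Z$ is surjective, hence $e$ generates $H^2(S^2;\mathbb Z)$, i.e.\ $e=\pm1$. The $S^1$-bundle over $S^2$ with Euler number $\pm1$ is precisely the Hopf bundle (the two signs being its two orientations), so the orbit bundle of the action is isomorphic to $S^1\hookrightarrow S^3\to S^2$, as claimed. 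Equivalently: the total space of the $S^1$-bundle over $S^2$ with Euler number $n$ is the lens space $L(n,1)$, with $\pi_1\cong\mathbb Z/n$, and simple connectivity of $S^3$ forces $n=\pm1$.

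I do not expect a genuine obstacle here: $S^3$ is rigid enough that the homotopy and Gysin sequences leave no slack, and the only non-formal inputs are the quotient manifold / slice theorem and the classification of closed surfaces. The most delicate book-keeping is keeping the signs in the Gysin sequence straight and recalling that ``the Hopf bundle'' is canonical only up to the conjugation $e\mapsto-e$. It is worth stressing that the whole argument collapses as soon as the action is allowed nontrivial stabilisers or one only asks for a foliation by circles: that is precisely why Epstein's general three-dimensional Periodic Orbits theorem lies much deeper than the free case treated here.
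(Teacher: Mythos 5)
Your argument is correct, but there is nothing in the paper to compare it against: Theorem \ref{essetre} is quoted as background and attributed to \cite{epstein} without proof. What you give is the standard elementary proof of the \emph{literal} statement about free actions, and every step checks out --- compactness of $S^1$ plus freeness yields a principal bundle via the slice theorem, the homotopy exact sequence forces the base to be a simply connected closed surface and hence $S^2$, and the Gysin sequence (equivalently the lens-space count $\pi_1(L(n,1))\cong\mathbb Z/n$) pins the Euler number to $\pm1$, i.e.\ the Hopf bundle up to a choice of orientation, which you correctly flag as the only ambiguity. Two contextual remarks. First, your closing caveat is the substantive point and is exactly right: what Epstein actually proves in the cited paper is far stronger --- every $C^1$ flow on a compact $3$-manifold all of whose orbits are circles has \emph{bounded} period function and is a Seifert fibration --- and the free-action statement is the easy corollary you have reproduced; it is the boundedness of periods in dimension $3$ that the paper means to contrast with Theorems \ref{essesette} and \ref{fibrati}, so your proof, while valid for the statement as written, does not recover the part of Epstein's theorem that is doing the work in this comparison. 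Second, your preamble observation that a genuinely free smooth $S^1$-action has constant period (hence bounded orbit lengths on a compact manifold) is worth taking seriously: it shows that the phrase ``free action with unbounded periods'' used elsewhere in the paper must be read as ``smooth flow all of whose orbits are circles with trivial holonomy'' (a foliation by circles) rather than a group action of $S^1$, and it is precisely in that weaker setting that your Steps 1--3 collapse, since the orbit space need not be a manifold and no principal-bundle structure is available.
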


\item [(ii)] The conjecture is false if $dim \, P \geq 4$. A smooth counterexample when $dim \, P=5$ has been given by D. Sullivan \cite{sullivan}: in the same article it is also described a real analytic example, due to W. Thurston. Later Epstein and Vogt \cite{epsteinvogt} gave a real analytic counterexample in dimension $4$.
\end{itemize}

This section is devoted to a rather detailed exposition of Sullivan's example outlined in \cite{sullivan}. We felt it necessary due to the importance of this example in the proofs of our main results and also in view of needed modifications of it. On the other hand the  example by Thurston, throughly explained in \cite{sullivan} and in the books \cite{besse}, Appendix A by D. Epstein, and \cite{godbillon}, does not fit well for our applications.

Let $\pi : S(S^2) \rightarrow S^2$ be the unit tangent bundle of the Euclidean $2$-sphere and denote:
$$
 M= S(S^2) \times_{S^2} S(S^2) = \{ (p,p') \in S(S^2) \times S(S^2) : \pi (p)= \pi (p') \}. 
$$
$M$ is the total space of the $T^2$-bundle (fiber product of $S(S^2)$ with itself) $\pi : M \rightarrow S^2$. The representation of a vector field $X:M \rightarrow TM$ as $X=(X_1 , X_2)$ where $X_1 (p,p') \in T_{p}S(S^2)$, $X_2 (p,p') \in T_{p'}S(S^2)$ follows from the embedding $M \hookrightarrow S(S^2) \times S(S^2)$. We will denote:
$$
H : S(S^2) \rightarrow T S(S^2)
$$
the Hopf vector field, {\it i.e.} the infinitesimal generator of the $S^1$-action on the unit tangent bundle: rather explicit models of these mathematical objects will be given in the next section, see (\ref{modello}) and the following remarks.

A smooth family of vector fields $Y_{\lambda} :  M \rightarrow TM$ is {\it infinitely (or exponentially) flat} as $\lambda \rightarrow \infty$ if near any $(p,p') \in M$ there exist local coordinates $u$ on $M$ such that for any multindex $\underline k = ( k_1, \ldots ,k_4)$ there exists two positive constants $c_1 (\underline k)$, $c_2 (\underline k)$ such that:
$$
\Vert \frac{\partial^{\Vert \underline k \Vert}Y_{\lambda}(u)}{\partial u^{\underline k}}     \Vert < c_1(\underline k) e^{- c_2 (\underline k) \lambda}
$$
for any $\lambda >0$. Of course, the fundamental property of an infinitely flat family of vector fields $Y_{\lambda}$, which will be used several times in this article, is that, putting for instance $\mu = \frac{1}{\lambda}$, $\mu \rightarrow Y_{\mu}$ extends smoothly the family of vector fields originally defined in $M\times]0,1[$ to $M\times[0,1[$, with $Y_0 =0$. The following proposition is a slight modification of the result outlined by Sullivan in \cite{sullivan}:
\begin{proposition}\label{campo}
There exists a smooth family of vector fields:
$$
X_{\lambda} : M \rightarrow TM
$$
such that:
\begin{itemize}
\item [$(1)$] for any $\lambda \in \mathbb{}R
^+$ the orbits of $X_{\lambda}$ are closed, with periods going to infinity as $\lambda \rightarrow \infty$
\item [(2)]
\begin{equation}\label{piatto}
X_{\lambda} = (H,0) + Y_{\lambda}
\end{equation}
where $Y_{\lambda}$ is infinitely flat as $\lambda \rightarrow \infty$.

Moreover, through a locally invertible map $p:\mathbb{R}^+ \cup \{ \infty \} \rightarrow S^1$, $p(\lambda)= \theta$, one can push forward $X_{\lambda}$ to get a smooth vector field on a compact manifold:
$$
X:M\times S^1 \rightarrow TM \times TS^1
$$
such that for any $\theta \in S^1$ each $M\times \{ \theta \}$ is fibered by circles by the orbits of $X$, and the periods of these orbits goes to infinity as $\theta \rightarrow \theta^* \in \{ \theta_1, \ldots , \theta_N \} = p^{-1}(\infty)$.
\end{itemize}

\end{proposition}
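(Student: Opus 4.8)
The plan is to realize $X_\lambda$ explicitly on a concrete model of $M$ and then to reduce everything to a computation of a Poincar\'e return map. First I would fix the identification $S(S^2)\cong SO(3)$, under which the Hopf field $H$ becomes the infinitesimal generator of the right $SO(2)$--action (spinning an orthonormal frame about its third axis), a free isochronous action of minimal period $2\pi$; and I would describe $M$ as $\{(g_1,g_2)\in SO(3)\times SO(3)\st g_1e_3=g_2e_3\}$, a principal $T^2$--bundle over $S^2$ whose two commuting circle directions are ``spin $g_1$'' and ``spin $g_2$'', with the Levi--Civita connection of the round $S^2$ furnishing a horizontal distribution. In these terms $(H,0)$ is ``spin $g_1$ at unit rate while holding $g_2$ and the common axis $x=g_1e_3$ fixed'', and its orbits are the Hopf circles in the first factor; this is the ``two rigid bodies with a common instantaneous axis'' picture.

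Next I would set $X_\lambda=(H,0)+Y_\lambda$, where $Y_\lambda$ is a ``slow drift'' at scale $\varepsilon_\lambda$: it moves the common axis $x$ along a prescribed motion on $S^2$ whose non-stationary orbits are closed curves, transports the two frames horizontally along this motion, and adds compensating spins $\sigma_1,\sigma_2$ to $g_1,g_2$. One takes $\varepsilon_\lambda\downarrow 0$ with $\varepsilon_\lambda$ and all its $\lambda$--derivatives $\le c\,e^{-c\lambda}$, so that $Y_\lambda$ is infinitely flat and $X_\lambda\to(H,0)$; over the (necessarily non-empty) stationary set of the base motion the drift is made to die out, so there $X_\lambda$ coincides with $(H,0)$ and the orbits are the short Hopf circles of period $2\pi$ --- these will be the exceptional leaves.

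The heart of the proof, and the step I expect to be the real obstacle, is to show that for every $\lambda$ all orbits of $X_\lambda$ are closed and to estimate their periods. For an orbit over a point whose axis lies on a closed base orbit, the return map after the axis completes its loop is a rotation of the $T^2$--fiber by a pair of angles $(\Phi_1,\Phi_2)$, with $\Phi_i=(\text{holonomy of the horizontal distribution around the loop})+(\text{accumulated spin of }g_i)$; by Gauss--Bonnet the holonomy is the enclosed area, and it is precisely this geometric phase, together with the freedom in $\sigma_1,\sigma_2$, that one tunes to force $\Phi_1,\Phi_2\in 2\pi\Z$. The delicate points are: (i) to do this so that $\sigma_1,\sigma_2$ stay smooth on $M$ and jointly smooth in $\lambda$ --- which forces the relevant integer winding numbers to be locally constant and imposes the compatibility constraints that dictate the admissible $\varepsilon_\lambda$ and the shape of the base motion; and (ii) to match this with the degenerate behaviour over the stationary set so the foliation remains smooth across the exceptional leaves. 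Granting this, the minimal period of a generic orbit equals the loop return time, which one arranges to be $\asymp 1/\varepsilon_\lambda\to\infty$.

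Finally, to pass to $M\times S^1$, I would choose $\varepsilon_\lambda$ (and the base geometry) so that $\mu\mapsto X_{1/\mu}$ extends smoothly to $\mu=0$ with value $(H,0)$, as the infinite flatness of $Y_\lambda$ guarantees, and pick $p:\R^+\cup\{\infty\}\to S^1$ wrapping the parameter line onto the circle so that finitely many values $\theta_1,\dots,\theta_N$ are attained ``at $\infty$''. Pushing $X_\lambda$ forward then yields a smooth $X$ on the compact $M\times S^1$ tangent to every slice $M\times\{\theta\}$, each slice foliated by the circle orbits of the corresponding $X_{p^{-1}(\theta)}$, with periods that are bounded on each slice but blow up as $\theta\to\theta_j$, exactly because they are $\asymp 1/\varepsilon_{p^{-1}(\theta)}$.
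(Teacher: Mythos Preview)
Your framework has a gap that Sullivan's construction is specifically designed to avoid. You take the horizontal part of $X_\lambda$ to be a slow vector field on the base $S^2$ and propose to tune fibre spins $\sigma_1,\sigma_2$ so that the Gauss--Bonnet holonomy is cancelled and $(\Phi_1,\Phi_2)\in(2\pi\mathbb{Z})^2$ after one base loop. But a smooth vector field on $S^2$ with all nonstationary orbits closed has two centres, and its orbit circles bound disks whose areas sweep continuously from $0$ to $4\pi$ between them. Your own boundary condition (``over the stationary set $X_\lambda$ coincides with $(H,0)$'') forces $\sigma_2\to 0$ at \emph{both} poles, while the closure condition $\Phi_2=A(\text{latitude})+T\sigma_2\in 2\pi\mathbb{Z}$ with a fixed integer makes $\sigma_2$ absorb the whole variation of $A$; one of the two pole limits is then $-2\eps_\lambda\neq 0$. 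Letting the integer jump destroys smoothness; letting the orbit close only after $k>1$ base loops makes $k$ depend discontinuously on the latitude. Either way some orbits fail to close. Relatedly, there are \emph{no} ``exceptional short leaves'' in the actual example: for each fixed $\lambda$ all leaves are congruent under $SO(3)$ and have the same period.

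The paper's construction is different in kind: the horizontal component of $X_\lambda$ at $(q,v,w)$ is $k_g^{-1}v$, which depends on the fibre variable $v$, so there is no base flow on $S^2$ and no holonomy to cancel. One first builds a single long immersed closed curve $\Gamma_\lambda\subset S^2$ --- a trochoid sent by stereographic projection onto a thin equatorial band and then smoothed by Kuiper's rotation trick about the corner --- whose geodesic curvature satisfies $1/k_g=O(h(\lambda))$ in every derivative, with $h(\lambda)=e^{-\lambda}$. This curve is lifted to $M$ by attaching its unit tangent in the first $S(S^2)$ factor and, in the second, a unit vector rotating at constant speed $2\pi/l(\lambda)$ along the curve; the lift $\Gamma_\lambda''$ is then translated by the diagonal $SO(3)$-action, and one checks directly (isometries preserve the angle $\langle v,w\rangle$) that the translates are pairwise disjoint and cover $M$. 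Closure needs no tuning: the first lift closes because $\Gamma_\lambda$ is a closed $C^1$ curve, the second by construction. Dividing the tangent field of this foliation by the globally defined scalar $k_g$ yields $X_\lambda=(H,0)+Y_\lambda$ with $Y_\lambda$ exponentially flat precisely because $1/k_g$ is; the common period of all leaves, essentially $\int_{\Gamma_\lambda}k_g\,ds$, tends to infinity with $\lambda$.
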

The manifold $M$ in this proposition is a slight modification, suitable for our future use, of the manifold where the example in \cite{sullivan} was originally defined.
 
The rest of this section is devoted to the construction pertaining the proof of this proposition. It will be given in algorithmic form, divided in several steps. For reader's convenience, we summarize here the plan of the proof.

Firstly, we define in $\mathbb{C}\simeq \mathbb{R}^2$ a closed continuous curve whose curvature $k(q , \lambda)$ depends on a functional parameter  $h(\lambda)$ and $\frac{1}{k}$, togheter with all its derivatives with respect to local coordinates, tend uniformly on the curve to $0$ as $\lambda \rightarrow \infty$: we will say that $\frac{1}{k}$ is {\it infinitely flat}, for short. This curve is mapped on a small equatorial band of $S^2$ by stereographic projection, getting a closed curve on the sphere whose geodesic curvature $k_g$ has still the property that $\frac{1}{k_g}$ is infinitely flat. The curve so obtained is not $C^1$ because at one point one generally has a corner, {\it i.e.} two different (asymptotic) tangent vectors form an angle $\Delta \theta$. By an argument  briefly sketched in Addendendum $1$ in \cite{sullivan}, for which credit is given to N. Kuiper, we define a smooth deformation of it, defined by rotations of fixed axis, given by the corner point on $S^2$, with rotation angle smoothly depending on the point on the curve, and coinciding with $\Delta \theta$ in a left neighbourhood of the corner. This modification provides a $C^1$ curve and does not change the asymptotic property of the geodesic curvature. Finally, a last modification, acting locally in a neighbourhood of the smoothed corner, gives a $C^{\infty}$ curve on the sphere whose reciprocal geodesic curvature is infinitely flat. The curve on $S^2$ is lifted to a curve on $M$ through the geodesic curvature in the first factor, and through constant speed, equal to the reciprocal of the curve lenght, on the second factor. These parameter-depending curve is moved along $M$ by a natural action of $SO(3) \simeq S(S^2)$ and the so obtained curves form the desired foliation by circles tending to the Hopf fibration in the first factor as the parameter tends to infinity.

{\it $1^{st}$ Step}

We consider the family of trochoids depending on the choice of the smooth function $h(\lambda)$:
\begin{equation}\label{trocoide}
\begin{cases}
\rho (t)&= 1 + a(\lambda) -h(\lambda) \cos (t + \frac{\pi}{2}) \\
\theta &= a(\lambda) t -h(\lambda) \sin t
\end{cases}
\end{equation}
where $0<a<h$, $a(\lambda)=g(\lambda)h(\lambda)$, $h(\lambda) = o(1)$, $g(\lambda)=o(1)$ as $\lambda \rightarrow \infty$, $t \in [0,\frac{3 \pi}{a(\lambda)}]$, $\theta = \theta \, mod \, 2 \pi$: $g(\lambda)$ must tend to $0$ in a sufficiently fast way which will be precised in the third step of this construction. The curvature of this family of trochoids is:

\begin{equation}\label{ktr1}
k_{tr}(\rho , \theta , \lambda)=\frac{\dot \rho \ddot \theta - \dot \theta \ddot \rho}{(\dot \rho^2 + \dot \theta^2)^{\frac{3}{2}}} = \frac{1}{h}\frac{1-g \cos (t + \frac{\pi}{2})}{(1 + g^2 -2g \cos (t + \frac{\pi}{2}))^{\frac{3}{2}}}
\end{equation}
hence:
\begin{equation}\label{ktr}
\frac{1}{k_{tr}} = h (1 + \eta_1 (t,\lambda))
\end{equation}
where $\eta_1 (t,\lambda)$ is smooth and $\frac{1}{k_{tr}}$ satisfies:
$$
\vert \frac{\partial^l k_{tr}^{-1}}{\partial t^l}(t, \lambda) \vert < c(l) h(\lambda)
$$
for every $l=0,1,\ldots$, uniformly with respect to $t\in [0,\frac{3 \pi}{a(\lambda)}]$: throughout all this section we will refer at this kind of property saying that {\it $\frac{1}{k_{tr}}$ is infinitely flat as $\lambda \rightarrow 0$}, obviously refferring to a suitable choice of $h(\lambda )$ to be specified later. 

We consider the family of plane curves:
\[
\gamma(\lambda) \, : \, z(t)=\rho (t) e^{i \theta (t)}
\]
$t \in [0, \frac{3 \pi}{a(\lambda)} ]$, $z=x+iy$ and compute their curvature:
\[
k(z,\lambda)=\frac{\dot x \ddot y - \dot y \ddot x}{(\dot x^2 + \dot y^2)^\frac{3}{2}}=k_{tr}(z;\lambda) \xi(z; \lambda) + \chi (z:\lambda)
\]
where
$$
\xi = \rho (\frac{\dot \rho^2 + \dot \theta^2}{\dot \rho^2 + \rho \dot \theta^2})^\frac{3}{2}
$$
and
$$
\chi = \dot \theta ( \frac{\dot \rho}{(\dot \rho^2 + \rho^2 \dot \theta^2)^{\frac{3}{2}}} +\frac{1}{(\dot \rho^2 + \rho^2 \dot \theta^2)^{\frac{1}{2}}} ).
$$
Therefore:
\begin{equation}\label{k}
\frac{1}{k(z,\lambda)}= h(\lambda)(1 + \eta_2 (z;\lambda))
\end{equation}
is smooth and infinitely flat as $\lambda \rightarrow 0$.

{\it $2^{nd}$ Step}

Firstly, we reparametrize the curve $\gamma (\lambda)$ defined in the previous step with a parameter, still called $t$, belonging to $[0,3 \pi]$: this reparametrization has no influence on the curvature, and therefore also on the asymptotic of the reciprocal curvature and on the fact that estimates of type (\ref{k}) hold. $\gamma (\lambda)$ is not a closed curve, in general. To get a closed curve satisfying property (\ref{k}) we define a non local deformation of $\gamma (\lambda)$. Firstly, we observe that the intersections between the trochoids defined by (\ref{trocoide}) and the curve $\rho = 1 + a (\lambda)$ are transverse, and occur at values of the parameter $t$ whose distance is $s(\lambda) \simeq a(\lambda)$, hence $s(\lambda)=o(1)$. Let $t(\lambda) \in ]2 \pi, 2 \pi + s(\lambda)[$ such that $\rho (t(\lambda))= 1 + a(\lambda)$: transversality of the intersections implies that $t \rightarrow t(\lambda)$ is smooth. Let:
$$
(\rho , \theta ) \rightarrow  \Theta (\rho , \theta) = -\alpha (\rho) \beta (\theta) \frac{\partial}{\partial \theta}
$$
be a smooth vector field, and let $0\leq \alpha , \beta \leq 1$ be smooth bump functions, such that:
\[
supp \, \alpha \subset [1+a-2h , 1 +a+ 2h]
\] 
\[
supp \, \beta \subset [\frac{3}{2}\pi , \frac{5}{2} \pi].
\]
Let $(\tau , \rho ,\theta ) \rightarrow (\rho , \zeta(\tau , \rho , \theta )))$ be the flow of $\Theta$, and let $\tau_0 \in ]0, s(\lambda)[$ be the solution of:
$$
\zeta (\tau , 1 + a(\lambda) , \theta (t(\lambda))) = 1 + a(\lambda).
$$
The map:
$$
\phi_{\lambda}(\rho , \theta)=(\rho , \zeta (\tau_0 , \rho , \theta)
$$
transforms the trochoid defined in (\ref{trocoide}) in a closed curve, and the condition $s(\lambda) = o(1)$ implies that denoting:
\begin{equation}\label{bound}
\phi_{\lambda}(\rho , \theta)= (\rho , \theta ) + F(\rho , \theta , \lambda)
\end{equation}
$F$ and all its derivatives with respect to $\theta, \rho$ tend uniformly, with respect to the parameter,  to $0$ as $\lambda \rightarrow \infty$. We still name $\gamma (\lambda)$ the continuos closed curve parametrized by:
$$
t \rightarrow \phi_{\lambda} \circ z (t)
$$
$t \in [0, t(\lambda)]$: it is a smooth curve outside $z(0)$, and we denote:
\begin{equation}\label{angolo}
\Delta \theta (\lambda)= angle \, (lim_{t \rightarrow 0^+} \dot z (t) , lim_{t \rightarrow t(\lambda)^-} \dot z (t))
\end{equation}
the angle between the limit tangent vectors at the ends of the parametrizing interval, which in general is not $0$. Of course:
$$
  \Delta \theta : \mathbb{R}^+ \rightarrow  S^1
$$
is a smooth function of $\lambda$.  From (\ref{ktr}) and (\ref{bound}) it follows that the curvature of such continuos closed curve satisfies  a property of type (\ref{k}), {\it i.e.} its reciprocal is infinitely flat as $\lambda \rightarrow \infty$.

{\it $3^rd$ Step}

Let $\gamma (\lambda)$ be the curve defined in the previous step of the construction, and suppose that it is parametrized by  $t \rightarrow z_{\lambda} (t)$, $t \in [0,2\pi]$. We suppose that $S^2=\{ x \in \mathbb{R}^3 : x^2_1 + x^2_2 + x^2_3 = 1 \}$ and denote $\phi^{-1}$ the stereographic projection from the south pole of the sphere $S^2$ onto the equatorial plane. Let:
$$
\tilde \Gamma_{\lambda} \, : \, t \rightarrow \phi \circ z_{\lambda}(t).
$$
The geodesic curvature $\tilde k_g$ of this curve on $S^2$ is expressed in terms of the Christoffel symbols and the curvature $k$ of $\gamma (\lambda)$ as (see \cite{kreyszig} (49.7) and the comment following that formula):
\begin{equation}\label{christoffel}
\tilde k_g (q,\lambda) = (\Omega (t, \lambda) + 
   k ( \phi^{-1}(p), \lambda ) )   \sqrt{\Delta(\phi (p))} \nonumber
\end{equation}
where $\Delta$ is the determinant of the metric tensor and:

\begin{align*}
\Omega = & ( \Gamma_{11}^2 (p) (\dot x(\phi^{-1}(p)))^2  +  ( 2\Gamma_{12}^2 (p) - \Gamma_{11}^1 (p) ) (\dot x(\phi^{-1}(p)))^2 (\dot y(\phi^{-1}(p))) + \\
 &(\Gamma_{22}^2 (p) - 
 2 \Gamma_{12}^1 (p))    \dot x(\phi^{-1}(p)) (\dot y(\phi^{-1}(p)))^2 
  - \Gamma_{22}^1 (p) (\dot y(\phi^{-1}(p)))^2) t^3 \sqrt{\Delta(\phi (p))} .
\end{align*}

 When $\lambda$ is big enough, the image of $\gamma (\lambda)$ through $\phi$ lay on a very thin equatorial band on the sphere, and the Christoffell symbols are bounded togheter with all their derivatives (actually in this case they tend to $0$ uniformly with respect to the point on the sphere). Moreover $\Delta(\phi (p))=1+o(1)$. The derivatives of the paramatrization of $\gamma (\lambda)$ are bounded if $g(\lambda)$ in (\ref{trocoide}) tends to $0$ sufficiently fast, as we will suppose: in fact, with a suitable choice of $g(\lambda)$ the curve $\gamma (\lambda)$ is approximatly $2 \pi$ long, and its tangent vector rotates in an approximately uniform way. Summarizing, there exsists $C>0$ such that:
\begin{equation}\label{geodetica}
\frac{1}{\tilde k_g (q(t),\lambda)} = C(h(\lambda) + \eta_3 (t, \lambda ))
\end{equation}
where $q$ is a point on the curve, $\eta_3 (t, \lambda )$ is smooth, and $\frac{1}{\tilde k_g (q(t),\lambda)}$ is infinitely flat as $\lambda \rightarrow \infty$ uniformly with respect to $t$. In this way, we obtained a closed continuous curve, smooth everywhere except for a discontinuity of the unit tangent vectors at a point, where unit tangent vectors form the angle $\Delta \theta (\lambda)$. To get a closed $C^1$ curve on $S^2$ still satisfying (\ref{geodetica}) we develop the argument due to N. Kuiper briefly outlined in \cite{sullivan}, Appendix $1$. Let:
$$
t \rightarrow \eta_{\lambda} (t)
$$
$t\in[0,2\pi]$, be a parametrization of $\tilde \Gamma_{\lambda}$. We suppose that the discontinuity of the tangent vector to $\tilde \Gamma_{\lambda}$ occurs at $\eta_{\lambda} (0)$. Let $R_{\lambda}(t) \in SO(3)$ be a rotation of axis $\eta_{\lambda} (0)$ and angle given by the smooth function:
$$
\alpha : [0,2 \pi] \times \mathbb{R}^+ \rightarrow S^1
$$ 
where:
$$
\alpha (t,\lambda)= \delta(t) \Delta(\theta (\lambda))
$$
and $\delta : [0, 2 \pi] \rightarrow [0,1]$ is a smooth monotone function, $\delta(t) \equiv 0$ when $t\in[0,\pi]$, $\delta(t) \equiv 1$ when $t\in[\frac{3}{2} \pi, 2\pi]$. Of course, $\alpha$ and all its derivatives are bounded. The curve:
$$
\Gamma_{\lambda} \, : t \rightarrow R_{\lambda} \phi \circ z_{\lambda}(t)= \xi_{\lambda}(t).
$$
is a closed  $C^1$ curve immersed in $S^2$. In fact, continuity of the tangent unit vectors at the point $\xi_{\lambda}(0)$ follows from:
$$
\frac{d}{dt}_{\vert  t= 2 \pi^-}\xi_{\lambda}= R_{\lambda} (\Delta \theta(\lambda)) \dot z_{\lambda}(2\pi)=\dot z_{\lambda}(0). 
$$ 
To get an estimate on the geodesic curvature of $\Gamma_{\lambda}$, we use again (\ref{christoffel}) applied to a parametrization of $\Gamma_{\lambda}$ by $t \in [0, 2 \pi]$ and referred to suitable choices of local coordinates on the sphere: though not necessarily small as for $\tilde \Gamma_{\lambda}$, the Christoffel symbols appearing in (\ref{christoffel}) are bounded independently of $\lambda$, and therefore we get the asymptotic behaviour of the geodesic curvature:
\begin{equation}\label{geodetica1}
 \frac{1}{k_g (q,\lambda)} = C (h(\lambda) + \eta_4(t,\lambda))
\end{equation}
where as usual $\frac{1}{k_g (q,\lambda)}$ is smooth and infinitely flat.
We have obtained a $C^1$ curve on the sphere, smooth everywhere except at a point, with geodesic curvature satisfying (\ref{geodetica1}): a local deformation of this curve transforms it in a smooth curve on the sphere whose geodesic curvature still satisfies (\ref{geodetica1}). The argument consists in smoothing a curve defined as the graph of a $C^1$ function $u \rightarrow f(u)$, $\vert u \vert <1$, smooth outside $u=0$: curvature jumps at $u=0$ from $f''(0^-)$ to $f''(0^+)$, and we modify $f(u)$ in a small neighbourhood $[-a(\lambda),a(\lambda)]$ of $0$, $a ( \lambda ) \rightarrow 0$ and smooth, in two steps: firstly we interpolate linearly between $f(u(-a(\lambda))$ and $f(u(a(\lambda))$, and then we smooth the corners with an arbitrarily small change in the second derivatives. In this way we obtain the desired smooth curve, still satisfying (\ref{geodetica1}): 

{\it $5^{th}$ Step}

So far we have defined a smooth $\lambda$-family of curves $\Gamma_{\lambda}$ on $S^2$, having lenghts $l(\lambda)$ tending to infinity when $\lambda \rightarrow \infty$, with reciprocal of the geodesics curvature which is infinitely flat for $\lambda \rightarrow \infty$. We show now, following an argument in \cite{sullivan}, that these curves define a smooth $\lambda$-family of foliations of $M$ tending as $\lambda \rightarrow \infty$ to the foliation on $M$ defined by the vector field $(H,0)$, with asymptotic behaviour depending on the choice of the functional parameter $h(\lambda)$. Let $(p,p')\in M$: then $\pi (p)=\pi (p')=q$ and we will denote, with slightly redundant notation, $(p,p')=(q,v,w)$ where $v,w$ are unit tangent vector to the sphere. Recalling that $SO(3) \simeq S(S^2)$ we define the action of $SO(3)$ on $M$:
\begin{equation}\label{azione}
g * (q,v,w)=(g*q,dg(q)v, dg(q)w)
\end{equation}
where $g * q$ is the standard action by left multiplication of $SO(3)$ onto itself.
The curve $\Gamma_{\lambda}$ on $S^2$ is lifted to a closed curve $\Gamma_{\lambda}' \subset S(S^2)$ associating to every $q \in \Gamma_{\lambda}$ its unit tangent vector. The curve $\Gamma_{\lambda}'$ is lifted to a closed curve $\Gamma_{\lambda}''$ on $M$ in the following way. Let $p_0 \in \Gamma_{\lambda}$ be fixed. To any $p=(q,v) \in \Gamma_{\lambda}$ we associate the lifted point $(p,p') \in M$ such that $p'=(q,w)$ where:
\begin{equation}\label{angolo}
w=e^{i 2 \pi \frac{l(p,\lambda)}{l(\lambda)}} v.
\end{equation}
Here $l(\lambda)$ is the lenght of $\Gamma_{\lambda}$ and $l(p,\lambda)$ is the lenght of the arc in $\Gamma_{\lambda}$ of extrema $p_0$, $p$. In other words, we add to the curve $\Gamma_{\lambda}$ a unit tangent vector rotating with constant speed $\frac{1}{l(\lambda )}$ once while the movable point goes on the curve $\Gamma_{\lambda}$. Let:
$$
\Gamma_{\lambda , g} = g*\Gamma_{\lambda}''.
$$ 
The closed curves $\Gamma_{\lambda , g}$, $g \in SO(3)$, define a smooth foliation by circles of $M$. To prove this claim we must show that:
\begin{itemize}
\item[$1$.] if $g,h \in SO(3)$, $g\neq h$, then $\Gamma_{\lambda , g} \cap \Gamma_{\lambda , h} = \emptyset$
\item[$2$.] for any $(q,v,w) \in M$ there exists $g \in SO(3)$ such that $(q,v,w) \in \Gamma_{\lambda , g}$.
\end{itemize}
To prove the first property is equivalent to show that:
$$
\Gamma_{\lambda , g} \cap \Gamma_{\lambda }'' = \emptyset
$$
for every $g \in SO(3)$, $g \neq \Id$. Let $(q,v,w) \in \Gamma_{\lambda , g} \cap \Gamma_{\lambda }''$: then there exists $(q',v',w') \in  \Gamma_{\lambda }''$ such that $g *(q',v',w')=(q,v,w)$. Using that $g$ is an isometry, we have that the angles $<v,w>$, $<v',w'>$ are equal, hence:
$$
e^{i 2 \pi \frac{l(q,v,\lambda)}{l(\lambda)}} = e^{i  2 \pi \frac{l(q',v',\lambda)}{l(\lambda)}}
$$
which implies that $g=\Id$. To prove $2$ we observe that the standard action of $SO(3)$ onto itself is transitive hence for every $(q',v')\in \Gamma_{\lambda}''$ there exists a $g \in SO(3)$ such that $g*(q',v')=(q,v)$. Among the elements of $\Gamma_{\lambda}''$ there exists exactly one $(q',v',w')$ such that $g*(q',v')=(q,v)$ and $w'=e^{i 2 \pi \frac{l((q,v),\lambda)}{l(\lambda)}} w$. Summarizing, we proved that the $\Gamma_{\lambda , g}$, $g\in SO(3)$, $\lambda \in \mathbb{R}^+$, define a smooth family of fibrations by circles of $M$. The leaves of this fibrations are the integral curves of the vector fields $Z_{\lambda} : M \rightarrow TM$, $Z_{\lambda}=(Z_{\lambda , 1},Z_{\lambda , 2})$, defined, with local notation, by:
\[
d \pi (q,v,w) Z_{\lambda , 1} (q,v,w)=d \pi (q,v,w) Z_{\lambda , 2} (q,v,w) = v
\]
and:
\[
\begin{cases}
Z_{\lambda , 1} (q,v,w) = & ( v , k_g (q,v,\lambda) H(q,v))\\
Z_{\lambda , 2 } (q,v,w) = & ( v , \frac{1}{l(\lambda)} H(q,v)).
\end{cases}
\]
On the other hand, the scalar $k_g (q,v,\lambda)$ is globally defined on $M$ and therefore:

$$
X_{\lambda}(q,v,w)=\frac{1}{k_g (q,v,\lambda)} Z_{\lambda}(q,v,w)
$$
is a smooth family of vector fields having the properties specified in Proposition (\ref{campo}). In fact, $X_{\lambda}$ decomposes locally as:
$$
X_{\lambda}=(X_{\lambda , 1},X_{\lambda , 2})
$$
where:
$$X_{\lambda , 1}(q,v,w)=(\frac{1}{k_g (q,v,\lambda)}v,H(q,v))
$$
and:
$$
X_{\lambda , 2}(q,v,w)=(\frac{1}{k_g (q,v,\lambda)}v , \frac{1}{k_g (q,v,\lambda)} \frac{1}{l(\lambda}).
$$
Let $X_{\lambda} = ((0,H),(0,0)) + \tilde X_{\lambda}$: we are going to show that $\tilde X_{\lambda}$ is infinitely {\it exponentially} flat as $\lambda \rightarrow \infty$ if we choose: 
$$
h(\lambda)=e^{- \lambda}.
$$ 
Firstly, the fact that:
$$
X_{\lambda} \rightarrow (0,H)
$$
as $\lambda \rightarrow \infty$ follows from the above local definition of $X_{\lambda}$. Moreover, we observe that $X_{\lambda}$ and $((0,H),(0,0))$ are both invariant with respect to the action (\ref{azione}), hence $\tilde X_{\lambda}$ is invariant, too. Invariance of $X_{\lambda}$ follows from the definition of its integral curves as $SO(3)$-translations of $\Gamma_{\lambda}''$, while $SO(3)$-invariance of the Hopf vector field on the first factor of $M$ follows from the well known property of congruence of Hopf circles. A proof of this last property can be obtained, for instance, from the expression of the Hopf vector field in (\ref{modello}) and the fact that the wedge product $\wedge$ in $\mathbb{R}^3$ is invariant with respect to $SO(3)$. Therefore to prove exponential flatness of $\tilde X_{\lambda}$ we just need to prove it with respect to derivations relative to a parameter along the curve $\Gamma_{\lambda}$, and this is what we proved in (\ref{geodetica1}).

To end the proof of Proposition (\ref{campo}) it remains to push forward this family of vector fields on $M \times S^1$ through a suitable choice of a map $p:\mathbb{R}^+ \cup \{ \infty \} \rightarrow S^1$: this part of the proof is similar to, and simpler than, the analogous one in Thurston's example. As we will never use it in the rest of this article, we leave it to the reader.

\section{Proofs of the main results}

To help geometric insight we introduce a well-known model of $\pi : S(S^2) \rightarrow S^2$. Let: 
$$
S(S^2) = \{ (x,y)\in \mathbb R_x^3 \times \mathbb R_y^3 \, : \, \vert x \vert^2 = \vert y \vert^2 = 1, x \cdot y =0 \}
$$
where $x \cdot y =x_1 y_1+ x_2 y_2 + x_3 y_3$. Let: $\pi (x,y)=x$ and define the Hopf vector field:
\begin{equation}\label{modello}
H :
\begin{cases}
\dot x = & 0 \\
\dot y =   & A(x) y
\end{cases}
\end{equation}
where:
\begin{equation}
A(x)=
\left(
\begin{array}{ccc}
0 & -x_3 & x_2  \\
x_3 & 0 & -x_1  \\
-x_2 & x_1 & 0 
\end{array} \right)
\end{equation} 
hence $A(x) y = y \wedge x$, $\wedge$ being the vector product in $\mathbb R^3$. Then $\pi : S(S^2) \rightarrow S^2$ with $S^1$-action defined by $(t,(x,y))\rightarrow e^{t A(x)}(x,y)$ is the ( Hopf ) unit tangent bundle of the $2$-sphere.

The above model generalize to $\pi :M=S(S^2) \times_{S^2} S(S^2) \rightarrow S^2$ in the following way. Let:
$$
M = S(S^2) \times_{S^2} S(S^2) = \{ (x,y,z)\in \mathbb R_x^3 \times \mathbb R_y^3 \times \mathbb R^3_{y'} \, : \, \vert x \vert^2 = \vert y \vert^2 =\vert y' \vert^2= 1, x \cdot y = x \cdot y' = 0 \}
$$
\[
\begin{cases}
\pi :  M  \rightarrow & S(S^2) \\
\pi (x,y,z) = & x
\end{cases}
\]
\[
(H,H) :
\begin{cases}
\dot x = & 0 \\
\dot y = &  A(x) y \\
\dot y' = &  A(x) y'.
\end{cases}
\]
The vector fields $X_{\lambda}$ defined in the previous section can be seen, for large $\lambda$, as the following perturbation of $(H,H)$:
\[
 X_{\lambda}:
\begin{cases}
\dot x = & a(x,y,y',\lambda) \\
\dot y = &  A(x) y + b(x,y,y',\lambda) \\
\dot y' = & c(x,y,y',\lambda)
\end{cases}
\]
where $a(x,y,y',\lambda)$, $b(x,y,y',\lambda)$, $c(x,y,y',\lambda)$ are exponentially flat smooth functions for $\lambda \rightarrow 0$.  

We can embed $M \hookrightarrow S(S^2) \times S(S^2)$ as a subbundle of the cartesian product bundle $\pi \times \pi : S(S^2) \times S(S^2) \rightarrow S^2 \times S^2$ obtaining the family of vector fields, still named $X_{\lambda}$:

\[
X_{\lambda} :
\begin{cases}
\dot x = & a(x,y,y',\lambda) \\
\dot x' =& 0\\
\dot y = &  A(x) y + b(x,y,y',\lambda)\\
\dot y' = &  c(x,y,y',\lambda)
\end{cases}
\]
The slice $\{(x,x',y,y')\in S(S^2) \times S(S^2): x' = x'^0 \}$,  $x'^0 \in S^2$, is diffeomorphic to $M$ and invariant with repect to the above vector fields, the dynamics on the slices being those of Sullivan's example. Using the double covering $SU(2) \simeq S^3 \rightarrow SO(3) \simeq S(S^2)$ we can lift this smooth $\lambda$-family of vector fiels to a still named $X_{\lambda}$ family, defined on $S^3 \times S^3$: {\it this is the meaning that $X_{\lambda}$ will have henceforth. Moreover we suppose that as $\lambda \rightarrow \infty$ the dynamics tend to the Hopf vector field on the}  second {\it factor }.

Let us consider the family $\mathcal S$ of $S^3$-bundles over $S^4$ having $SO(4)$ as structural group: the equivalence classes of these bundles are in one-one correspondence with the elements of $\pi_3((SO(4)) = \mathbb{Z}\oplus \mathbb{Z}$, see \cite{steenrod} and (\ref{classificazione}) below, and therefore are denoted $\xi_{h,j}$, $(h,j) \in \mathbb{Z} \oplus \mathbb{Z}$. The $\xi_{h,j}$'s are defined as follows. Stereographic projections of $S^4=\{ u_1^2 + \cdots +u_5^2 = 1 \}$ from north and south pole, $N=(0, \ldots ,1)$, $S=(0, \ldots ,-1)$, onto $\mathbb{R}^4$ with coordinates $v_i=u_i$, $i=1,\ldots ,4$, gives $S^4 = (\mathbb{R}^4 \coprod \mathbb{R}^4)/\simeq$, where $v \simeq \frac{v}{\Vert v \Vert^2}$, $v \neq 0$.  We will make free use of the identification $\mathbb{C}^2 \simeq \mathbb{R}^4 \simeq \mathbb{H}$, where $\mathbb{H}$ is the quaternion algebra, and  $\mathbb{H}_1$ denotes the multiplicative group of quaternions of modulus one. The fibers of the bundles are diffeomorphic to:
\begin{equation}\label{3sfera}
S^3 =\{z=(z_1 , z_2) \in \mathbb{C}^2: \vert z_1 \vert^2 + \vert z_2 \vert^2 =1 \} \simeq \mathbb{H}_1 \simeq SU(2)
\end{equation}
therefore we will denote: $q \in \mathbb{H}_1$, $z=(z_1 , z_2) \in S^3$ and with abuse of language we will identify $q=(z_1 , z_2)$ meaning that:
\[
M(q)=
\left(
\begin{array}{cc}
z_1 & z_2   \\
- \overline z_2 & \overline z_1 
\end{array} \right)
\]
or that $q=a+bi+cj+dk$, $z_1=a+bi$, $z_2=c+di$.

We define the fiber bundles:
\begin{equation}\label{milnor}
\xi_{h,j} = ((\mathbb{R}^4 \times S^3) \coprod (\mathbb{R}^4 \times S^3))/\simeq
\end{equation}
where, denoting an element of $S^3 \simeq \mathbb{H}_1$ as the quaternion of unitary modulus $w$, the transiction function $\varphi_{SN , hj}(v, w))$ between the two trivializng charts of the fiber bundle is given by:
\begin{equation}\label{transizione1}
(v,w)\simeq (\frac{v}{\Vert v \Vert^2}, \frac{v^h w v^j}{\Vert v \Vert}) = (\frac{v}{\Vert v \Vert^2}, \varphi_{SN , hj}(v, w))
\end{equation}
where $h+j=1$.
Restricting the transiction functions to the equator $\{v\in \mathbb{R}^4 : \Vert v \Vert =1 \}$, one obtains the maps:
\begin{equation}\label{classificazione}
\begin{cases}
\varphi_{SN,hj} : S^3 \rightarrow SO(4)\\
\varphi_{SN , hj}(v, w) = v^h w v^j
\end{cases}
\end{equation}
where quaternion multiplication is understood: these maps classify $\xi_{h,j}$, for integers $h,j$, $h+j=1$ \cite{steenrod}. In \cite{milnor} Milnor proved that the total space $E_{h,j}$ of each $\xi_{h,j}$ is homeomorphic to the Euclidean $S^7$, but only the $E_{h,j}$ such that $k^2 -1 \equiv 0 \, mod \, 7$, $k=h-j$, are diffeomorphic to the Euclidean $S^7$.

$\xi_{1,0}$, $\xi_{0,1}$ are the only principal $SU(2)$-bundles in $\mathcal S$ \cite{taubes}: they are not isomorphic as principal bundles, differing for the orientation of the fibers \cite{milnor}. In fact, both $\xi_{1,0}$, $\xi_{0,1}$ are quaternionic Hopf bundles obtained from the restriction of the tautological bundle $\mathbb{H}\times \mathbb{H} -\{(0,0)\} / \mathbb{H}^* \rightarrow \mathbb{HP}$ to the Euclidean unit sphere $S^7 \subset \mathbb{R}^8 \simeq  \mathbb{H}\times \mathbb{H}$ and the restriction of  the action of $\mathbb{H}^*=\mathbb{H}-\{0\}$ to $\mathbb{H}_1$: $\xi_{1,0}$ and $\xi_{0,1}$ are obtained respectively from {\it left} and {\it right} action of $\mathbb{H}^*$. These remarks imply that $E_{1,0}$, $E_{0,1}$ are both diffeomorphic to the Euclidean $S^7$. Let us enter in some details concerning the definition of these bundles, paying attention to their compatibility with a suitable notion of Hopf fibration $S^1 \hookrightarrow S^7 \rightarrow \mathbb{CP}^3$.

For instance, $\xi_{0,1}$ is obtained from the intersections with $S^7$ of the {\it right} quaternionic lines through the origin of $\mathbb{H} \times \mathbb{H}$ given by:
\begin{equation}\label{destra}
q_2 = q_1 v
\end{equation}
where $v$ is a quaternion variable parametrizing $U_N$: these intersections are the fibers $\pi^{-1} (v) \simeq S^3$, parametrized by the unit quaternions $w$. Let $S^1 \hookrightarrow S^7 \rightarrow \mathbb{CP}^3$ be the Hopf fibration generated by the vector field:
\[
H_{0,1} :
\begin{cases}
\dot z_1 =& i z_1 \\
\dot z_2 =& iz_2 \\
\dot z_3 =& iz_3 \\
\dot z_4 =&iz_4 
\end{cases}
\]
and let $q_1 =(z_1 , z_2)$, $q_2 =(z_3 , z_4)$. The fibers of $\xi_{0,1}$ are invariant with respect to $H_{0,1}$, for from (\ref{destra}):
$$
e^{it} q_2 = e^{it} q_1 v
$$
therefore in trivializing coordinates $H_{0,1}$ defines on each fiber the left action of $S^1$ on $S^3$:
$$
(e^{it} , w ) \rightarrow e^{it} w 
$$
{\it i.e.} on each fiber $\xi_{0,1}$ $H_{0,1}$ define the Hopf vector field on $S^3$.
The trivial identity:
$$
(e^{it} w) \frac{\mu}{\vert \mu \vert} = e^{it} (w \frac{\mu}{\vert \mu \vert})
$$
and (\ref{transizione1}), (\ref{destra} ) imply that the transiction function $\varphi_{SN} = \varphi_{SN,0,1}$ of this bundle commutes with the flow of $H_{0,1}$:
\begin{equation}\label{commutazione}
\varphi_{SN} \circ e^{t H_{0,1}} = e^{t H_{0,1}} \circ \varphi_{SN}.
\end{equation}
We syntethize this argument saying that $\xi_{0,1}$ and the $S^1$-bundle defined by $H_{0,1}$ are {\it compatible}. In the same way, $\xi_{1,0}$, defined by the intersections of $S^7$ with the {\it left} quaternion lines:
$$
q_2 = v q_1
$$
turns to be compatible with the bundle $S^1 \hookrightarrow S^7 \rightarrow \mathbb{CP}^3$ defined by the vector field:
\[
H_{1,0} :
\begin{cases}
\dot z_1 =& i z_1 \\
\dot z_2 =& -iz_2 \\
\dot z_3 =& iz_3 \\
\dot z_4 =& -iz_4 
\end{cases}
\]
A differentiable conjugacy between $H_{0,1}$ and $H_{0,1}$ is obtained by the trasformation $(z_1 , z_2 , z_3 , z_4) \rightarrow (z_1 ,\overline z_2 , z_3 , \overline z_4)$.

\begin{theorem}\label{fibrati}
In $E_{0,1}$ there exists a smooth vector field:
$$
X_{0,1}: E_{0,1} \rightarrow T E_{0,1}
$$
whose integral curves define a foliation by circles of $E_{0,1}$, with unbounded periods: an analogous construction holds in $E_{1,0}$.
\end{theorem}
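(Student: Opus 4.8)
The plan is to transplant Sullivan's family $X_{\lambda}$ of Proposition (\ref{campo}) onto $E_{0,1}$, using the $S^{3}$-bundle structure $\pi\colon E_{0,1}\to S^{4}$ of $\xi_{0,1}$ and its compatibility (\ref{commutazione}) with the Hopf flow of $H_{0,1}$. Over $S^{4}$ with the two poles $N,S$ deleted the bundle $\xi_{0,1}$ is trivial, so in the stereographic chart $U_{N}\simeq\mathbb R^{4}$ one has $E_{0,1}\big|_{S^{4}\setminus\{N,S\}}\simeq(\mathbb R^{4}\setminus\{0\})\times S^{3}=\{(v,w)\}$, with $w\in S^{3}\simeq\mathbb H_{1}$ the fibre coordinate; writing $v=\|v\|\,u$, $u=v/\|v\|\in S^{3}$, this is $S^{3}_{u}\times\mathbb R_{\log\|v\|}\times S^{3}_{w}$. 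I would identify $S^{3}_{u}\times S^{3}_{w}$ with the domain $S^{3}\times S^{3}$ of $X_{\lambda}$ so that the limiting Hopf field on the second factor (to which $X_{\lambda}$ tends as $\lambda\to\infty$) coincides with $H_{0,1}$ restricted to a fibre, i.e. with the left-multiplication action of $S^{1}$ on $S^{3}\simeq\mathbb H_{1}$.

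Next I would fix a smooth positive function $\lambda\colon\mathbb R^{4}\setminus\{0\}\to\mathbb R^{+}$, for definiteness $\lambda(v)=\|v\|+\|v\|^{-1}$, which tends to $+\infty$ both as $\|v\|\to0$ and as $\|v\|\to\infty$, is of order $\|v\|^{-1}$ near the origin of $U_{N}$ and of order $\|v'\|^{-1}$ near the origin of $U_{S}$, $v'=v/\|v\|^{2}$ being the other stereographic coordinate. Writing $X_{\lambda}$, on $S^{3}\times S^{3}$, as its limiting Hopf field plus an exponentially flat correction $\tilde X_{\lambda}=(\tilde X_{\lambda,1},\tilde X_{\lambda,2})$ (part $(2)$ of Proposition (\ref{campo})), I would define $X_{0,1}$ on $E_{0,1}\big|_{S^{4}\setminus\{N,S\}}$, in the $U_{N}$-trivialisation, by
\[
\dot v=\|v\|\,\tilde X_{\lambda(v),1}\Big(\tfrac{v}{\|v\|},\,w\Big),\qquad \dot w=H(w)+\tilde X_{\lambda(v),2}\Big(\tfrac{v}{\|v\|},\,w\Big),
\]
with $H$ the left Hopf action on the fibre. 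Since $\tilde X_{\lambda,1}(u,w)$ is tangent to $S^{3}$ at $u$, the first equation keeps $\|v\|$ constant, so every orbit lies in a slice $\{\|v\|=\mathrm{const}\}\simeq S^{3}\times S^{3}$ on which it is an orbit of $X_{\lambda(v)}$; by part $(1)$ of Proposition (\ref{campo}) these orbits are circles whose period tends to $\infty$ as $\|v\|\to0$ or $\|v\|\to\infty$. Hence any smooth extension of $X_{0,1}$ across the two poles automatically has unbounded period function on $E_{0,1}$.

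It then remains to check that $X_{0,1}$, so far defined only over $S^{4}\setminus\{N,S\}$, extends smoothly across the exceptional fibres $\pi^{-1}(N)$ and $\pi^{-1}(S)$, and this is where the \emph{exponential} flatness of $\tilde X_{\lambda}$ is indispensable. Near $\pi^{-1}(N)=\{v=0\}$: every $u$-derivative of $\tilde X_{\lambda,j}$ is $O(e^{-c\lambda})$, so, $\lambda(v)$ being of order $\|v\|^{-1}$ there, the chain-rule differentiations of $\tilde X_{\lambda(v),j}(v/\|v\|,w)$ --- which introduce only negative powers of $\|v\|$ --- still yield functions vanishing to infinite order at $v=0$; hence $X_{0,1}$ extends smoothly, restricting on $\{v=0\}\times S^{3}$ to $(0,H)$, i.e. to $H_{0,1}$ on that fibre. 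Near $\pi^{-1}(S)$ I would pass to the $U_{S}$-chart, $v'=v/\|v\|^{2}$, $w'=wv/\|v\|=wu$; since $\dot u$ is again $O(e^{-c\lambda})$ one finds $\dot w'=\dot w\,u+w\,\dot u=H(w)\,u+O(e^{-c\lambda})$ with $H(w)\,u=H(w')$ --- this last identity being precisely the commutation (\ref{commutazione}), as $H$ acts on the fibre by left and the transition function by right quaternion multiplication --- while $\dot v'=\|v'\|\cdot O(e^{-c\lambda})$; since $\lambda(v)$ is of order $\|v'\|^{-1}$ near $v'=0$, smoothness across $\pi^{-1}(S)$ follows just as before, with the same restriction. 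This produces the desired smooth $X_{0,1}$ on $E_{0,1}$. The construction in $E_{1,0}$ is identical with right and left quaternion multiplication interchanged --- so that the fibre action of $H_{1,0}$, which is right multiplication, is compatible with the left-multiplication transition function of $\xi_{1,0}$ --- or it can be obtained by transporting $X_{0,1}$ through the diffeomorphism $(z_{1},z_{2},z_{3},z_{4})\mapsto(z_{1},\bar z_{2},z_{3},\bar z_{4})$, which carries $\xi_{0,1}$ to $\xi_{1,0}$ and $H_{0,1}$ to $H_{1,0}$.

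The step I expect to be the main obstacle is exactly this smooth-extension verification: one must tune the blow-up rate of $\lambda$ so that the exponential --- not merely $C^{\infty}$ --- flatness furnished by Proposition (\ref{campo}) turns into infinite-order vanishing simultaneously at $v=0$ in the $U_{N}$-chart and at $v'=0$ in the $U_{S}$-chart, and one must carefully follow, through the identifications $SU(2)\simeq S^{3}\simeq\mathbb H_{1}$ and the double cover used to place $X_{\lambda}$ on $S^{3}\times S^{3}$, that Sullivan's limiting Hopf field on the second factor really is the left-multiplication action made compatible with $\xi_{0,1}$ by (\ref{commutazione}).
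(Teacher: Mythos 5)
Your proposal is correct and follows essentially the same route as the paper: trivialize $\xi_{0,1}$ off the two poles, place Sullivan's $X_{\lambda}$ on the slices $\{\Vert v \Vert = \mathrm{const}\}\simeq S^3\times S^3$ with $\lambda$ blowing up at both polar fibers, use the exponential flatness of $\tilde X_{\lambda}$ to beat the polynomially singular differential of the transition function, and invoke the commutation (\ref{commutazione}) of the right-multiplication transition function with the left Hopf action to identify the limit on the south polar fiber with $H_{0,1}$. The only cosmetic difference is your choice $\lambda(v)=\Vert v\Vert+\Vert v\Vert^{-1}$ in place of the paper's $\lambda(u_5)=1/(1-u_5^2)$, which plays the same role.
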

\begin{proof}
We carry on the construction of the vector fields in the statement referring to a general bundle $\xi_{0,1} \in \mathcal{S}$, specializing to the cases of $\xi_{0,1}$, $\xi_{1,0}$ only when needed, see the comment after this proof.

Let $(U_N \times S^3, \psi_N)$ and $(U_S \times S^3 , \psi_S)$, $U_N,U_S \simeq R^4$, be the two trivializing charts defining $\xi_{h,j}$, $(h,j) \in \mathbb{Z} \oplus \mathbb{Z}$, $h+j=1$. The transition function between these charts is $\psi^{-1}_S \circ \psi_N=(\frac{v}{\Vert v \Vert}, \varphi_{SN}(v,w))$, where we recall that:
$$
\varphi_{SN}(v,w)= \frac{w^h v w^j}{\Vert w \Vert}.
$$
Let:
\begin{equation}\label{landa}
\lambda (u_5) = \frac{1}{1-u_5^2}
\end{equation}
and consider the diffeomorphism:
$$
h : S^3 \times ]-1 , 1[ \times S^3 \rightarrow U_N - \{ N \} \times S^3.
$$
such that (spherical coordinates):
\begin{equation}\label{acca}
h^{-1}(v,w)= (\frac{v}{\Vert v \Vert} , {\Vert v \Vert} , w) = (\theta , r ,w)
\end{equation}
where $\theta \in S^3$, $r>0$.
The $\lambda$-depending family of Sullivan's vector fields $X_{\lambda}$ on $S^3 \times S^3$ introduced at the beginning of this section and (\ref{landa})  define a vector field $X$ on the cylinder $S^3 \times ]-1 , 1[ \times S^3$, having each $\{ u_5 = constant \} \simeq S^3 \times S^3$ as an invariant manifold where $X=X_{\lambda (u_5)}$. The diffeomorphism $h$ sends this vector field to $Y = h_* X$ which is well defined in $U_N -\{ N \} \times S^3$ and has the form:
\[
Y :
\begin{cases}
\dot v = &Y_{B }(v,w) \\
\dot w =& Y_{F }(v,w,)
\end{cases}
\]
where $v$ is a coordinate on the first factor in $U_N \times S^3$, the base, with $v(N)=0$, and $w$ is a coordinate on the second factor, the fiber, and $Y_B$, $Y_F$ are respectively the base and fiber component of $Y$.

Firstly, we extend $Y$ to a smooth vector field on $U_N \times S^3$,  such that $Y_B(0,w)=0$, $Y_F(0,w)=H(w)$, $H$ being the Hopf vector field on $S^3$, defining on each fiber the flow:
$$
(t, w) \rightarrow e^{it}w.
$$

In spherical coordinates defined by $h^{-1}$, (\ref{landa}) the invariance of $X_{\lambda (u_5)}$ on $\{  u_5 \, = \, constant \}$ implies that:
\[
Y:
\begin{cases}
\dot \theta = & Y_s(\theta , r) \\
\dot r = & 0 \\
\dot w = & H(w) + o(1)
\end{cases}
\]
where $o(1)$, $Y_s$ are smooth and exponentially flat as $r \rightarrow 0$ as a consequence of the analogous property of the Sullivan's vector field, which differs from the Hopf vector field on the second factor of $S^3 \times S^3$ for an exponentially flat vector field. This property implies the extendibility up to the north polar fiber of $Y$. In fact, going back to $(v,w)$-coordinates, the equation $\dot r =0$ and the asymptotic property of $Y_s$ near the north polar fiber imply that:
$$
\dot v = r \dot \theta =  r Y_s (\theta , r)= \vert v \vert Y_s (\frac{v}{\vert v \vert} , \vert v \vert )=Y_B (v,w)
$$
while the vertical component $Y_F$ remains unchanged. The vector field $Y_r (\theta , r)$ satisfies:
$$
lim_{v \rightarrow 0} \frac{\partial^{\vert \underline l \vert}}{\partial v^{\underline l}}(\vert v \vert Y_r (\frac{v}{\vert v \vert} , \vert v \vert)) =0
$$
for any multindex $\underline l$, which implies that $Y$ extends smoothly at the north polar fiber as the Hopf vector field on $S^3$.

We still name $Y$ the vector field obtained from this extension, and rewrite it in $U_N \times S^3$ as: 
\begin{equation}\label{nord}
Y:
\begin{cases}
\dot v = & Y_B (v, w) \\
\dot w = & Y_F (v,w)= H(w) + \tilde Y_F(v,w)
\end{cases}
\end{equation}
where $Y_B (v,w) , \tilde Y_F(v,w) $ are exponentially flat as $v \rightarrow 0$. The expression of this vector field in $U_S - \{S \} \times S^3$ is obtained changing variable in (\ref{nord}) according to (\ref{transizione}). Let $(u,\eta)$ be trivializing coordinates in $U_S \times S^3$, then:
\begin{equation}\label{nordsud}
\begin{cases}
u = & u(v) =  \frac{v}{\Vert v \Vert} \\
\eta = & \varphi_{SN}(v,w)=\frac{v^h w v^j}{\Vert v \Vert}.
\end{cases}
\end{equation}
and the expression of $Y$ in these coordinates is:

\[
Y:
\begin{cases}
\dot u = & \frac{\partial u}{\partial v}(v(u))Y_{B}(v(u), w(u,\eta)) \\
\dot \eta = &  \frac{\partial \varphi_{SN}}{\partial v}(v(u) , w(u,\eta) ) Y_{B} (v(u) , w(u,\eta))  +  \frac{\partial \varphi_{SN}}{\partial w}(v(u) , w(u,\eta) ) (H( w(u,\eta)) \\ & + \tilde Y_{F} (v(u) , w(u,\eta))).
\end{cases}
\]
The differentials of the change of coordinates are singular at the south polar fiber, their norms tending polynomially at infinity as $u \rightarrow 0$: on the other hand, $u \rightarrow 0$ is equivalent to $u_5 \rightarrow \pm 1$ which implies that the horizontal part $Y_B$ and the vertical term $\tilde Y_F$ go exponentially fast to $0$, leading to the following asymptotic expression for $Y$:

\begin{equation}\label{limite}
\begin{cases}
\dot u =& 0 \\
\dot \eta =& lim_{u \rightarrow 0} \frac{\partial \varphi_{SN}}{\partial w}(v(u) , w (u , \eta) ) H( w (u , \eta)).
\end{cases}
\end{equation}
where existence of the limit  and its equality with the Hopf vector must be proved.

Here we turn to the case of $\xi_{0,1}$ bundle: putting $H=H_{0,1}$ in (\ref{commutazione}) we get:
$$
\frac{\partial \varphi_{SN}}{\partial w}(v(u) , w (u , \eta) ) H(  w (u , \eta)) = H(  w (u , \eta))
$$
and (\ref{limite}) follows.

The fact that, in this case, on the south polar fiber the extended vector field coincides with the Hopf vector field follows from (\ref{commutazione}), too. Summarizing, we obtained a smooth vector field on $E_{0,1} \simeq S^7$ whose integral curves have unbounded periods.

An analogous construction of a fibration by circles on $E_{1,0}$ with unbounded lenght of the leaves, is obtained adapting the above arguments to the case of left $\mathbb{H}^*$ action on the tautological quaternionic bundle: here the main change consists in defining the Hopf fibration $S^1 \hookrightarrow S^7 \rightarrow \mathbb{CP}^3$ through the infinitesimal generator $H_{1,0}$, and consistently defining the $\mathbb{H}_1$ action on $S^3$ as:
$$
(t,w) \rightarrow w e^{it}.
$$
All the other arguments remain unchanged.
\end{proof}
One could wonder if a construction similar to the one just described for $\xi_{0,1}$, $\xi_{1,0}$ can be extended to the other bundles of the family $\mathcal S$: here the main obstruction is (\ref{commutazione}), which probably asks for a suitable definition for every $(h,j)$ of a Hopf-type vector field, defining a $S^1$- action on $E_{h,j}$ which is compatible with the $\xi_{h,j}$ bundle structure.

The proof of Theorem (\ref{essesette}) follows immediatly:
\begin{proof} (of Theorem (\ref{essesette}))
Just apply the previous theorem to the case of $E_{0,1}$ and recall that it is diffeomorphic to $S^7$. The last claim in the statement of the theorem, concerning the existence of a $\lambda$-family of vector fields on the $7$-sphere converging to the Hopf vector field, will be proved, with explicit mention, along the following proof of Theorem (\ref{multicentri}).
\end{proof}
We can prove now the result on multicentres:
\begin{proof}(of Theorem (\ref{multicentri}))
Referring to the proof of Theorem (\ref{fibrati}), we consider on $S^3 \times ]-1,1[ \times S^3 \times \mathbb{R}^+$ the Sullivan's vector field $X_{\lambda}$, with:
\begin{equation}\label{landa1}
\lambda (u_5 , R) = \frac{R}{1-u_5^2}.
\end{equation}
where $u_5 \in ]-1,1[$ and $R \in \mathbb{R}^+$.
For any fixed $R$ let:
$$
\tilde h (v,u_5 , w ,R) = h( v , u_5 , w)
$$
where $h$ is the diffeomorphism defined in (\ref{acca}) and $v$, $u_5$, $w$ are coordinates on $S^3 \times ]-1 , 1[ \times S^3$. Let $Y=   h_* X_{\lambda (u_5 , R)}$  defined on the parallels $\{ u_5 = constant \} \simeq S^3 \times S^3$: the proof of Theorem (\ref{fibrati}) shows that for every fixed $R$, a smooth extension of this vector field defines a fibration by circles on $ S^7$ with unbounded period function. Let $(r,z)$ be  spherical coordinates in a neighbourhood of $0 \in \mathbb{R}^8$, related to $x$-coordinates on $\mathbb{R}^8$ by:
\[
\begin{cases}
r &= \Vert x \Vert \\
z &= \frac{x}{\Vert x \Vert}
\end{cases}
\]
and let $r=\frac{1}{R}$. Writing $Y$ in spherical coordinates:
\[
\begin{cases}
\dot r &= 0 \\
\dot z &= Y(r,z).
\end{cases}
\]
From the definition of Sullivan's vector field, when $r \rightarrow 0$:
$$Y(r,z) \rightarrow H(z)$$
with exponentially fast convergence: this completes the proof of Theorem (\ref{essesette}). The Hopf vector field on the divisor of the blow up is:
$$H(z) = A z$$
with $A$ made of four identical diagonal blocks:
\[
A=
\left(
\begin{array}{cccc}
R & 0 & 0 & 0  \\
0 & R & 0 & 0  \\
0 & 0 & R & 0 \\
0 & 0 & 0 & R 
\end{array} \right)
\]
where:
\[
R =
\left(
\begin{array}{cc}
0 & -1 \\
1 & 0
\end{array} \right).
\]
Therefore:
\[
\dot x = r \dot z = r A z + \Vert x 	\Vert Y ( \Vert x \Vert , \frac{x}{\Vert x \Vert})
\]
hence:
\[
\dot x = A x + \tilde Y(x)
\]
with $\tilde Y(x)$ a smooth exponentially flat vector field at the origin. The proof of the theorem is complete.

\end{proof}

We conclude making few remarks related to the results of this article.

Firstly, we point out that our example of non-linearizable nondegenerate multicentre is neither Hamiltonian (with respect to the standard symplectic stucture on $\mathbb{R}^8$) nor real analitic, and is therefore natural to wonder about an analogous example in these settings.

Moreover, our example differs from the linearized vector field at $0$ for an infinitely flat vector field, hence proving that no normal form argument can distinguish smooth nonlinearizable nondegenerate multicentres from the linearizables ones.

Finally, we observe that a simple application of Seifert's Stability Theorem \cite{sullivan}, \cite{bgv} implies that any vector field $Y$, sufficiently $C^2$-close to $X$ and still having $x^2_1 + \cdots + x^2_8$ as first integral, has at least one periodic orbit of minimal period close to $2 \pi$, contained in each level set of the first integral. Perhaps it would be intersting, if possible, to find some reasonable genericity assumptions on $Y$ in order that it has a sequence of periodic orbits accumulating at the stationary point, having unbounded minimal periods.

\end{document}